\def\id{\textrm{Id}}
\newcommand{\D}{\ensuremath{\mathbb{D}}} 
\newcommand{\R}{\ensuremath{\mathbb{R}}} 
\newcommand{\longto}{\longrightarrow}
\newtheorem{theo}{Theorem}
\newtheorem{prop}[theo]{Proposition}
\newtheorem{rem}[theo]{Remark}
\def\D{\displaystyle}
\newcommand{\var}{\varepsilon}
\def\te{\theta}
\def\longto{\longrightarrow}
\def\D{\displaystyle}
\newcommand{\be}{\begin{equation}}
\newcommand{\ee}{\end{equation}}
\def\benu{\begin{enumerate}}
\def\eenu{\end{enumerate}}
\def\pbd#1{probability density w.r.t. \ensuremath{#1}}
\title{The geometry of  Euclidean convolution inequalities and entropy}
\author{Dario Cordero-Erausquin and Michel Ledoux}
\begin{document}

\maketitle

\begin{abstract}
The goal of this note is to show that some convolution type inequalities from Harmonic Analysis and Information Theory, such as Young's convolution inequality (with sharp constant), Nelson's hypercontractivity of the Hermite semi-group or Shannon's inequality, can be reduced to a simple geometric study of frames of $\R^2$.
We shall derive directly entropic inequalities, which were recently proved to be dual to the Brascamp-Lieb convolution type inequalities. 
\end{abstract}

\bigskip

\section{Introduction}

The topic of Brascamp-Lieb and convolution type inequalities was recently renewed by Carlen, Lieb and Loss~\cite{CLL}  who proposed a semi-group or heat flow approach to these inequalities. (Soon after  Bennett, Carbery, Christ and Tao~\cite{BCCT1} independently  gave a semi-group approach to multidimensional Brascamp-Lieb inequalities.)
Carlen, Lieb and Loss also obtained new inequalities on the sphere, and in particular a  subadditivity of the entropy inequality. It was noted in~\cite{BCM}  that this inequality can be proved using geometric properties of the Fisher information of the marginal distributions. Pushing forward these investigations, Carlen and Cordero-Erausquin~\cite{CC} derived a similar geometric treatment for general subbaditivity of the entropy inequalities on Euclidean space, and proved that these inequalities are dual to the Brascamp-Lieb inequalities. The semi-group approach was recently carried out in the unifying framework of abstract Markov semi-groups in~\cite{BCML}.

The abstract geometric argument in $\R^n$ is particularly simple and for self-consistency, we  describe it in details in this introduction.

In the sequel, $\mu$ will stand either for the Lebesgue measure on $\R$ or for the standard Gaussian probability measure $\gamma$ on $\R$ (with density $(2\pi)^{-1/2}e^{-|t|^2/2}$). Consequently, $\mu_n := \mu^{\otimes n}$ will stand for the Lebesgue measure or the standard Gaussian measure $\gamma_n$ on $\R^n$.  It is convenient to treat these two cases in parallel although it is possible to derive formally one from another.

Say that $f$ is a probability density with respect to (w.r.t.) $\mu_n$
if $f:\R^n\to \R^+$ is such that $\int\! f\, d\mu_n=1$. 
Given a random vector $X\in \R^n$ with $f$ as \pbd{\mu_n} (a relation written below as  $X\sim f\, d\mu_n$), its \emph{entropy} w.r.t $\mu_n$ is defined (whenever it makes sense) by
$$S_{\mu_n}(X):=S_{\mu_n}(f):= \int_{\R^n} f \log f\, d{\mu_n}.$$
In the case $\mu_n$ is the Lebesgue measure we shall use the notation $S(X)=S(f)=\int f\log f $. All along the paper, it will be implicitly assumed in all statements that we consider only densities and random vectors with well defined and finite $\mu_n$-entropy.

If $f$ is \pbd{\mu_n} on $\R^n$ and $a\in \R^n$ is a fixed non-zero vector, denote by  $f_{(a)}$ the marginal \pbd{\mu} on $\R$, i.e. $f_{(a)}\, d\mu$  is the image of $f\, d\mu$ under the map $x\to a\cdot x$.
Thus, $f_{(a)}$ is characterized by the requirement that
\begin{equation}\label{def:marginal}
\int_{\R^n} \phi (x\cdot a) f(x) \,d\mu_n(x) = \int_{\R} \phi (t) f_{(a)}(t) \,d\mu_1(t)  
\end{equation}
for every bounded measurable $\phi:\R\to\R$.
Equivalently,  if $X\sim f\, d\mu_n$, then $f_{(a)}\, d\mu_1$ is the density of $X\cdot a$, that is $X\cdot a\sim f_{(a)}\, d\mu_1$. Thus,
$S_{\mu}(X\cdot a)=S_{\mu}(f_{(a)})= \int_{\R} f_{(a)}(t)\log f_{(a)}(t)\, d\mu(t) $.
The classical subadditivity of entropy (usually stated with the Lebesgue measure) indicates that for an orthonormal basis $(u_1, \ldots, u_n)$ of $\R^n$ and a random vector $X$,
\begin{equation}\label{eq:classicalsub}
\sum_{i=1}^n S_{\mu_1}(X\cdot u_i) \le S_{\mu_n} (X).
\end{equation}
The relation between subadditivity inequalities and Brascamp-Lieb inequalities is summarized in the following proposition.
 \begin{prop}[\cite{CC}]  \label{prop:dual}
 For non-zero vectors $a_1,\ldots, a_m \in \R^n$, $c_1\, \ldots, c_m\ge 0$ and $D\in \R$, the following assertions are equivalent:
\begin{enumerate}
\item For every $f_1,\ldots, f_m :\R\to\R^+$, 
$\D \int_{\R^n}\prod_{i=1}^m f_i (x\cdot a_i)^{c_i} \, d\mu_n(x) \; \le e^D \; \prod_{i=1}^m \left(\int_{\R} f_i \, d\mu_1\right)^{c_i}.$
\item For every random vector $X\in \R^n$,
$\D  \sum_{i=1}^m c_i \, S_{\mu}(X\cdot a_i) \; \le\;  S_{\mu_n}(X)  \; + \;D .$
\end{enumerate}
\end{prop}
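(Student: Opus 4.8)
The plan is to deduce both implications from a single elementary fact, the \emph{entropy inequality} (Gibbs' variational principle): for any measure $\nu$, any probability density $g$ w.r.t.\ $\nu$, and any measurable $\Phi$,
\[
\int \Phi\, g \, d\nu \;\le\; \log \int e^{\Phi}\, d\nu \;+\; \int g \log g\, d\nu ,
\]
with equality exactly when $g$ is proportional to $e^\Phi$. This is just the nonnegativity of the relative entropy of $g\,d\nu$ with respect to the normalized density $e^\Phi/\!\int e^\Phi d\nu$, i.e.\ Jensen's inequality for $\log$ against the probability measure $g\,d\nu$. Throughout I write $\Phi(x)=\sum_{i=1}^m c_i\,\phi_i(x\cdot a_i)$ on $\R^n$ and use the marginal identity (\ref{def:marginal}), which gives $\int \phi_i(x\cdot a_i)\,g(x)\,d\mu_n(x)=\int \phi_i\, g_{(a_i)}\,d\mu_1$ for every $g$.

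For $(1)\Rightarrow(2)$, fix a random vector $X\sim f\,d\mu_n$ and apply $(1)$ to the \emph{marginals themselves}, $f_i:=f_{(a_i)}$. Since each $f_{(a_i)}$ is a probability density w.r.t.\ $\mu$, the right-hand side of $(1)$ collapses to $e^D$, so $\int_{\R^n}\prod_{i=1}^m f_{(a_i)}(x\cdot a_i)^{c_i}\,d\mu_n(x)\le e^D$. Now take $\Phi=\sum_i c_i\log f_{(a_i)}(\cdot\, a_i)$, so that $e^\Phi=\prod_i f_{(a_i)}(\cdot\,a_i)^{c_i}$, and feed it into the entropy inequality with $\nu=\mu_n$, $g=f$. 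Its left-hand side becomes $\sum_i c_i\int \log f_{(a_i)}\, f_{(a_i)}\,d\mu_1=\sum_i c_i\,S_\mu(X\cdot a_i)$ by (\ref{def:marginal}); the first term on the right is $\log\int e^\Phi d\mu_n\le D$ by the bound just obtained; and the last term is $S_{\mu_n}(X)$. Rearranging yields $(2)$.

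For $(2)\Rightarrow(1)$, note first that both sides of $(1)$ are homogeneous of the same degree $c_i$ in each $f_i$, so we may assume every $f_i$ is a probability density w.r.t.\ $\mu$; writing $f_i=e^{\phi_i}$ it then suffices to prove $Z:=\int_{\R^n}e^{\Phi}\,d\mu_n\le e^D$. The idea is to test $(2)$ on the extremizer of the entropy inequality on $\R^n$. Assuming $Z<\infty$, let $g:=e^{\Phi}/Z$ and $X^*\sim g\,d\mu_n$. A direct computation of the entropy of this Gibbs density gives the exact identity $\log Z=\int \Phi\, g\,d\mu_n-S_{\mu_n}(X^*)$. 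Expanding $\int\Phi\,g\,d\mu_n=\sum_i c_i\int \phi_i\,g_{(a_i)}\,d\mu_1$ and bounding each summand by the entropy inequality on $\R$ (with $\nu=\mu$, density $g_{(a_i)}$, function $\phi_i=\log f_i$, noting $\log\int e^{\phi_i}d\mu_1=\log\int f_i\,d\mu_1=0$) gives $\int \Phi\, g\,d\mu_n\le \sum_i c_i\,S_\mu(X^*\cdot a_i)$. Hence
\[
\log Z \;\le\; \sum_{i=1}^m c_i\,S_\mu(X^*\cdot a_i)-S_{\mu_n}(X^*)\;\le\; D,
\]
the last step being exactly $(2)$ applied to $X^*$, which is the desired inequality.

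The main obstacle is purely technical: one must justify that the Gibbs density $g=e^\Phi/Z$ and its marginals $g_{(a_i)}$ have finite entropy so that $(2)$ legitimately applies, and one must dispose of the case $Z=\infty$. Both are handled by a routine approximation — replacing each $f_i$ by a truncation $f_i\wedge N$ with compact support, for which $Z_N<\infty$ and all entropies are finite, proving $Z_N\le e^D$, and letting $N\to\infty$ by monotone convergence. Under the paper's standing assumption that only densities with well-defined finite entropy are considered, the extremal computation and the two applications of the entropy inequality are then fully justified.
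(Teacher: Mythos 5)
Your proof is correct and follows exactly the route the paper indicates: the Legendre duality between $S_{\mu_n}$ and $V\mapsto\log\int e^V d\mu_n$ (your Gibbs variational principle), combined with the marginal identity~\eqref{def:marginal}, testing $(1)$ on the marginals $f_{(a_i)}$ in one direction and $(2)$ on the Gibbs density $e^\Phi/Z$ in the other. You have simply written out in detail the argument the paper only sketches, including a reasonable treatment of the integrability caveats.
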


We also have a complete equivalence between the equality cases. This result is easy to prove (actually it holds in much more general settings): it formally relies  on the fact that the the Legendre transform of the entropy functional is the functional $V\to \log \int e^{V}\, d\mu_n$, that is
\begin{equation*}\label{dualentropy}
\log\int e^V d\mu_n= \sup_f \left \{ \int f V\, d\mu_n - S_{\mu_n}(f) \right \}\textrm{ and }
S_{\mu_n}(f) = \sup_V \left\{ \int f V \, d\mu_n - \log\int e^V \, d\mu_n\right\},
\end{equation*}
and on how this combines with~\eqref{def:marginal}. The dual of the subadditivity
inequality~\eqref{eq:classicalsub} is nothing else but Fubini's theorem.

It is possible to consider more general geometric situations than the one of an orthonormal basis~\eqref{eq:classicalsub}. Of particular interest is the case of a decomposition of the identity, as put forward by Ball in the context of  Brascamp-Lieb inequalities (see e.g.~\cite{Ball}). Given \emph{unit} vectors $u_1, \ldots , u_m$ in the Euclidean space $\R^n$ and real numbers $c_1, \ldots, c_m >0$, we say that they decompose the identity if
\begin{equation}\label{decomp}
\sum_{i=1}^m c_i \, u_i \otimes u_i = \id_{\R^n}
\end{equation}
where $u_i \otimes u_i $ stands for the orthogonal projection in the {direction} $u_i$. Note that necessarily
$c_i\le 1$  and
\begin{equation}\label{comp}
\sum_{i=1}^m c_i = n.
\end{equation}

It is easy to derive sharp subadditivity entropy inequalities using~\eqref{decomp} because such decompositions combine nicely with the Fisher information, as noted in~\cite{BCM}. The point is that the Fisher information has an $L^2$ structure which allows for geometric operations such as  projections (or equivalently,  conditional expectation). A random vector $X\in \R^n$ with  $f$ as \pbd{\mu_n} is said to have finite $\mu_n$-Fisher information  if the following quantity is well defined and finite:
$$I_{\mu_n}(X):=I_{\mu_n}(f):=\int_{\R^n}\frac{|\nabla f|^2}f \, d\mu_n.$$
It follows from the Cauchy-Schwarz inequality (see~\cite{carlen, CC}) that for a unit vector $u\in \R^n$,
\begin{equation}\label{supadd1}
I_{\mu_1}(f_{(u)})  = I_{\mu_1}(X\cdot u) \le   \int_{\R^n} \frac{(\nabla f \cdot u)^2}f\, d\mu_n\, ,
\end{equation}
with equality if and only if $X\cdot u$ and $X- (X\cdot u)u$ are independent. If we are given a decomposition of the identity, then, rewriting~\eqref{decomp}
in the form
\be\label{decomp0}
\forall v \in \R^n , \quad \sum_{i=1}^m c_i \, (v\cdot u_i)^2 = |v|^2 ,
\ee
we immediately get from~\eqref{supadd1} that for any random vector $X$ with finite Fisher information,
\begin{equation}\label{infosub}
 \sum_{i=1}^m c_i \, I_{\mu}(X\cdot u_i) \le I_{\mu_n}(X) .
 \end{equation}
 In order to get an inequality for entropy, we integrate along the suitable semi-group. Let $L$ stand for the differential operator $Lf= \Delta f$ (Laplacian) when $\mu_n$ is the Lebesgue measure, and $Lf=\Delta f - x\cdot\nabla f$ in the case $\mu_n=\gamma_n$. Let $P_t = e^{tL}$ be the corresponding heat semi-group and Ornstein-Uhlenbeck semi-group, which admit as invariant measure the Lebesgue and the Gaussian measure respectively. If $X$ is a random vector with density $f$ with respect to $\mu_n$ and finite $\mu_n$-entropy, then $e^{tL}f$ is a smooth probability density with respect to $\mu_n$,  with finite $\mu_n$-Fisher information, and
$$\frac{{\rm d}}{{\rm d}t} \, S_{\mu_n}(e^{tL}f) = -I_{\mu_n}(e^{tL}f).$$
Moreover, $L$ (and thus $e^{tL}$) has the property that it preserves the algebra of functions of the form $f(x)=g(x\cdot u)$, a property also used in the proof of~\eqref{supadd1}. This ensures the following crucial property,
namely, for every $t\ge 0$,
\be\label{stability}
(e^{tL}f)_{(u)}  = e^{tL} (f_{(u)})\quad \textrm{ on } \R,
\ee
where we used the same notation for the one-dimensional and $n$-dimensional semi-groups. The heat and
Ornstein-Uhlenbeck semi-groups are the two most important diffusion semi-groups sharing property~\eqref{stability}, and this explains the particular role played here by the Lebesgue and Gaussian measures.

Now, integration of~\eqref{infosub} along the semi-group $e^{tL}$ leads to the inequality
$\sum_{i=1}^m c_i\, S_{\mu}(X\cdot u_i) \le S_{\mu_n}(X).$
From the cases of equality in~\eqref{supadd1}  we get that equality holds if and only if  for each $i\le m$,
$X\cdot u_i$ and $X - (X\cdot u_i)u_i$ are independent (a property which is preserved along the semi-group). Under mild conditions on the vectors $u_i$, it is easily seen that this can happen only when $X$ is a Gaussian vector (see~\cite{CC} for details). The previous discussion is summarized in the next proposition, established in~\cite{CC}.

\begin{prop} \label{special}
Consider a decomposition of the identity~\eqref{decomp} in  $\R^n$  Then, for all random vectors $X\in \R^n$,
\begin{equation}\label{entropysub2}
\sum_{i=1}^m c_i\,  S_{\mu}(X\cdot u_i) \le  S_{\mu_n}(X)  .
\end{equation}
Furthermore, under the condition that  no two of the unit vectors $\{u_i\}$ are linearly dependent, and that
if any one vector $u_i$ is removed from $\{u_1,\dots,u_m\}$, the remaining vectors still span
$\R^n$, equality holds in~\eqref{entropysub2}  if and only if $X$ is a Gaussian random variable whose covariance is a multiple of the identity.
\end{prop}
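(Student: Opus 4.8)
The plan is to obtain \eqref{entropysub2} from the Fisher information inequality \eqref{infosub} by flowing along $e^{tL}$, and then to extract the equality case from the rigidity already recorded in \eqref{supadd1}. Fix $X\sim f\,d\mu_n$ and put $f_t:=e^{tL}f$ with $X_t\sim f_t\,d\mu_n$; for $t>0$ the density $f_t$ is smooth with finite Fisher information, so \eqref{infosub} applies to $X_t$. Introduce
$$\Phi(t):=S_{\mu_n}(f_t)-\sum_{i=1}^m c_i\,S_{\mu}\bigl((f_t)_{(u_i)}\bigr),\qquad t\ge 0.$$

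Differentiating, I would use $\tfrac{d}{dt}S_{\mu_n}(f_t)=-I_{\mu_n}(f_t)$ together with the one-dimensional identity applied to each marginal; the latter is licit precisely because of the stability property \eqref{stability}, which gives $(f_t)_{(u_i)}=e^{tL}\bigl(f_{(u_i)}\bigr)$, so that $\tfrac{d}{dt}S_{\mu}\bigl((f_t)_{(u_i)}\bigr)=-I_{\mu}\bigl((f_t)_{(u_i)}\bigr)$. Hence
$$\Phi'(t)=-I_{\mu_n}(f_t)+\sum_{i=1}^m c_i\,I_{\mu}\bigl((f_t)_{(u_i)}\bigr)\le 0,$$
the inequality being exactly \eqref{infosub} for $X_t$. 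Thus $\Phi$ is non-increasing. Next I would show $\lim_{t\to\infty}\Phi(t)=0$: in the Gaussian case $e^{tL}f\to 1$ both on $\R^n$ and on each line, whence $S_{\mu_n}(f_t)\to0$ and $S_{\mu}\bigl((f_t)_{(u_i)}\bigr)\to0$; in the Lebesgue case $f_t$ spreads into a centred Gaussian of variance of order $t$, and after rescaling by $\sqrt t$ the divergent $\tfrac12\log t$ contributions cancel owing to the normalisation $\sum_i c_i=n$ from \eqref{comp}, the remaining limit being the (vanishing) value of $\Phi$ at an isotropic Gaussian. Combining monotonicity with $\Phi(\infty)=0$ gives $\Phi(0)\ge0$, which is \eqref{entropysub2}.

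For the equality case I assume $\Phi(0)=0$. Since $\Phi$ is non-increasing and $\Phi(\infty)=0$, it vanishes identically, so $\Phi'\equiv0$; as each $c_i>0$, equality must hold termwise in \eqref{supadd1} for $X_t$, for every $i$ and every $t>0$. By the rigidity in \eqref{supadd1} this means $X_t\cdot u_i$ is independent of $X_t-(X_t\cdot u_i)u_i$ for all $i$ and $t>0$. Writing $g=f_t$ (smooth for $t>0$), this independence forces $g$ to factor as a function of $x\cdot u_i$ times a function of the projection of $x$ onto $u_i^{\perp}$; differentiating $\log g$ then shows that each $u_i$ is an eigenvector of $\nabla^2\log g(x)$ at every point $x$. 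Invoking the geometric hypotheses — no two $u_i$ parallel, and any one of them removable without losing the spanning property — one concludes that $\nabla^2\log g(x)$ is a scalar multiple of the identity at each $x$; a short integrability argument forces the scalar to be constant, so $\log g$ is a quadratic polynomial and $X_t$ is Gaussian with covariance a multiple of the identity. Letting $t\to0$ transfers this to $X$. The converse is the easy direction: for an isotropic Gaussian each $X\cdot u_i$ is automatically independent of its orthogonal complement, so equality holds.

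I expect the real obstacle to be this equality analysis, and within it the implication from the family of pairwise independences to genuine isotropic Gaussianity. The delicate point is to verify that the non-degeneracy and spanning conditions truly prevent $\nabla^2\log g$ from decomposing with distinct eigenvalues along complementary orthogonal subspaces — i.e. that the family $\{u_i\}$ cannot split into orthogonal groups — together with the regularity needed to pass to the limit $t\to0$. The inequality itself is by comparison soft, following from the monotonicity of $\Phi$ once \eqref{infosub}, \eqref{stability} and the asymptotics at $t=\infty$ are in place; for the careful justification of the rigidity step I would follow \cite{CC}.
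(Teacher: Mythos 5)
Your proposal is correct and follows essentially the same route as the paper: the inequality is obtained by combining~\eqref{supadd1} with the decomposition~\eqref{decomp0} to get~\eqref{infosub}, then integrating along $e^{tL}$ using $\frac{d}{dt}S_{\mu_n}=-I_{\mu_n}$ together with the stability property~\eqref{stability} (and, in the Lebesgue case, the cancellation of the logarithmic divergences via $\sum_i c_i=n$), while the equality case is reduced to termwise equality in~\eqref{supadd1}, i.e.\ independence of $X\cdot u_i$ and $X-(X\cdot u_i)u_i$ for each $i$, with the rigidity under the spanning hypotheses deferred to~\cite{CC} exactly as the paper does. No substantive deviation to report.
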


In view of the duality given by Proposition~\ref{prop:dual}, we recover from the previous inequality Ball's form of the Brascamp-Lieb inequality: for every $f_1,\ldots, f_m :\R\to\R^+$, 
\be\label{BL}
\D \int_{\R^n}\prod_{i=1}^m f_i (x\cdot u_i)^{c_i} \, d\mu_n(x) \; \le \; \prod_{i=1}^m \left(\int_{\R} f_i \, d\mu\right)^{c_i}.
\ee
Moreover (under the same hypothesis on the $u_i$'s as in the previous proposition), equality holds if and only if 
 the measures $f_i\, d\mu_1$ are Gaussian with the same covariance: $f_i(t)\, d\mu(t) = \lambda_i e^{ -\alpha^2 (t-v\cdot a_i)^2 }\, dt$ with $\lambda_i>0$, $\alpha\in \R^\ast$ and $v\in \R^n$ ($v=0$ if we restrict to centered functions). Note that using~\eqref{decomp0} it is also possible to pass from inequalities for the Lebesgue measure to inequalities for the standard Gaussian measure (and \emph{vice versa}) by the correspondance
 $f_i \longleftrightarrow f_i(t) \, e^{-|t|^2/2}$.

\bigskip

The goal of this note is to prove the efficiency of the theoretical aforementioned approach in some meaningful situations. More precisely, we aim at understanding convolution and information theoretic inequalities, such as the sharp Young convolution inequality, Nelson's hypercontractivity of the Hermite semi-group, or Shannon's inequality, as functional forms of some particular decompositions of the identity of $\R^2$. 
To do so, we will study, in the next section \S2, the decompositions of the identity of $\R^2$ by three vectors. 
We give a complete description of the relation between the coefficients $c_i$ and the vectors $u_i$ in this case. As a consequence, we obtain the following general inequality which can be viewed as the functional form of Proposition~\ref{theo:decomp2} below.

\begin{theo}\label{maintheo}
Let $p_1, p_2, p_3 >1$ be such that
$$\frac1{p_1} +\frac1{p_2} + \frac1{p_3} = 2,$$
and $\theta_2, \theta_3 \in ]0, \pi[$ be defined by
$$(\cos(\theta_2),\, \sin(\theta_2)) \!=\! \bigg (\sqrt{(p_1-1)(p_2-1)} , \, \sqrt{\frac{p_1\, p_2 \, (p_3-1)}{p_3}}\bigg)$$
and
$$ (\cos(\theta_3), \, \sin(\theta_3))\!=\! \bigg(-\sqrt{(p_1-1)(p_3-1)} , \, \sqrt{\frac{p_1\, p_3\, (p_2-1)}{p_2}}\bigg).$$
Then, for every random variables $X, Y\in \R$, 
 \begin{equation}\label{sub0}
\frac1{p_1} \, S_{\mu}\big(X\big) + \frac1{p_2}\,  S_{\mu}\big(\cos(\te_2) X + \sin(\te_2) Y\big)  +  \frac1{p_3} \, S_{\mu}\big(\cos(\te_3) X + \sin(\te_3) Y\big)  \le S_{\mu_2}\big(X,Y\big).
\end{equation}
with equality if and only if $X$ and $Y$ are independent identically distributed Gaussian variables.

Equivalently, for every functions $g\in L^{p_2}(\mu)$ and $ h\in L^{p_3}(\mu)$, setting $p_1'=\frac{p_1}{p_1-1}$,
\be\label{int0}
 \left\|\int  g(\cos(\te_2) x + \sin(\te_2) y)\, h(\cos(\te_3) x + \sin(\te_3) y) \, d\mu(y) \right\|_{L^{p'_1}(d\mu(x))} \le \|g\|_{L^{p_2}(\mu)}\, \|h\|_{L^{p_3}(\mu)}
 \ee
with equality if and only if $f$ and $g$ are of constant sign with $|g(t)|^{p_2}d\mu(t)=K_2\, e^{-\lambda (t-a_2)^2} dt$ and $|h(t)|^{p_3}d\mu(t)=K_3\, e^{-\lambda (t-a_3)^2} dt$ for $ a_1, a_2 \in \R$ and $K_2, K_3,\lambda\ge0$.
\end{theo}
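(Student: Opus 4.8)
\medskip

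The plan is to recognize \eqref{sub0} as a special case of Proposition~\ref{special} for a well-chosen decomposition of the identity of $\R^2$, and then to obtain the functional form \eqref{int0} by the duality of Proposition~\ref{prop:dual}. Reading $(X,Y)$ as a random vector in $\R^2$, the three linear forms in \eqref{sub0} are the projections $X\cdot u_i$ onto the unit vectors
$$u_1=(1,0),\qquad u_2=(\cos\theta_2,\sin\theta_2),\qquad u_3=(\cos\theta_3,\sin\theta_3),$$
and the weights are $c_i=1/p_i$. So everything reduces to checking that these $u_i$ and $c_i$ decompose the identity in the sense of \eqref{decomp}; this is precisely the functional translation of the geometric classification stated in Proposition~\ref{theo:decomp2}.

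First I would verify \eqref{decomp} through its equivalent scalar form \eqref{decomp0}. Writing $v=(v_1,v_2)$, the required identity $\sum_i c_i (v\cdot u_i)^2=v_1^2+v_2^2$ splits into three scalar relations, obtained by comparing the coefficients of $v_1^2$, $v_2^2$ and $v_1v_2$. The two diagonal relations are each equivalent to the single hypothesis $\tfrac1{p_1}+\tfrac1{p_2}+\tfrac1{p_3}=2$; already the normalizations $\cos^2\theta_i+\sin^2\theta_i=1$ reduce to it. The mixed term $c_2\cos\theta_2\sin\theta_2+c_3\cos\theta_3\sin\theta_3$, on the other hand, vanishes unconditionally: its two summands have equal squares, both equal to $p_1(p_1-1)(p_2-1)(p_3-1)/(p_2p_3)$, so the sign choice $\cos\theta_3=-\sqrt{(p_1-1)(p_3-1)}<0$ is exactly what forces them to cancel. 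Granting \eqref{decomp}, Proposition~\ref{special} gives \eqref{sub0}.

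For the equality case I would check the two hypotheses of Proposition~\ref{special}. Because $\theta_2,\theta_3\in\,]0,\pi[$ with $\cos\theta_2>0>\cos\theta_3$, the three directions $0,\theta_2,\theta_3$ are pairwise distinct in $[0,\pi)$, so no two of $u_1,u_2,u_3$ are linearly dependent; and since any two linearly independent vectors already span $\R^2$, removing any one of the three leaves a spanning family. Proposition~\ref{special} then shows that equality in \eqref{sub0} holds exactly when $(X,Y)$ is Gaussian with covariance proportional to the identity, i.e. when $X$ and $Y$ are i.i.d. Gaussian.

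It remains to pass to \eqref{int0}, which is pure duality. By Proposition~\ref{prop:dual} with $D=0$ (the constant in \eqref{BL} is $1$), inequality \eqref{sub0} is equivalent to the Brascamp--Lieb inequality \eqref{BL} for these $u_i$ and $c_i=1/p_i$. Taking $f_1=F^{p_1}$, $f_2=g^{p_2}$, $f_3=h^{p_3}$ with $F,g,h\ge0$ turns the right-hand side of \eqref{BL} into $\|F\|_{L^{p_1}(\mu)}\|g\|_{L^{p_2}(\mu)}\|h\|_{L^{p_3}(\mu)}$, while, by Fubini, the left-hand side becomes $\int_\R F(x)\,\Phi(x)\,d\mu(x)$ with $\Phi(x)=\int_\R g(\cos(\theta_2)x+\sin(\theta_2)y)\,h(\cos(\theta_3)x+\sin(\theta_3)y)\,d\mu(y)$; taking the supremum over $F$ in the unit ball of $L^{p_1}(\mu)$ and using $L^{p_1}$--$L^{p_1'}$ duality yields \eqref{int0} for nonnegative $g,h$, the general signed case following by bounding $|\Phi|$ through $|g|,|h|$. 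The equality description in \eqref{int0} is then read off from the equality case recorded just after \eqref{BL}. I expect no conceptual obstacle once Propositions~\ref{prop:dual} and~\ref{special} are in hand: the genuine work is the algebra of the second paragraph---chiefly the mixed-term cancellation and the correct choice of signs---together with the careful transport of the equality cases through the duality.
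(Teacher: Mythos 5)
Your proof is correct and follows essentially the same route as the paper: take $c_i=1/p_i$ and the three unit vectors as a decomposition of the identity of $\R^2$, apply Proposition~\ref{special} for the entropy inequality and its equality case, and pass to~\eqref{int0} by the duality of Proposition~\ref{prop:dual}. The only cosmetic difference is that you verify the decomposition by direct computation of the three scalar relations (diagonal terms reducing to $\sum 1/p_i=2$, mixed term cancelling by the sign choice), whereas the paper reads it off from Proposition~\ref{theo:decomp2} via the substitution $c_i=1/p_i$.
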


We apply this result in section \S3 to the determination of the sharp constant in the Young convolution inequality. For this, we will work directly with the entropy and exploit the following  simple but useful invariance of the entropy (in the case of Lebesgue measure) under linear transformation: for a random vector $X\in\R^n$ and an invertible linear operator $A$ on $\R^n$,
\begin{equation} \label{entropyinv}
S(A X)= S(X) - \log(|\det(A)|) .
\end{equation}
In section \S4 we derive the Shannon inequality from a limit of decompositions of the identity. The hope is to
shep in this way new light on the connection between subadditivity of entropy and Shannon's inequality.
We study similarly Gaussian inequalities in section \S5 and derive in particular hypercontractity of the Hermite semi-group
(and the associated logarithmic Sobolev inequality).
For simplicity, we consider here only one-dimensional inequalities. In the last section \S6 we briefly explain how to extend word by word the approach to multi-dimensional situations.


\medskip

\section{Decomposition of the identity of $\R^2$}

As announced, we investigate here decompositions of the identity of $\R^2$.  A decomposition of the identity with only two vectors $u_1$ and $u_2$ holds  if and only if these two vectors form an orthonormal basis and $c_1=c_2=1$.  In order to get something of interest, consider the case of three distinct unit vectors in $\R^2$, $u_1$, $u_2$ and $u_3$. Note that $u_i\otimes u_i = (-u_i)\otimes(-u_i)$ so here and in the sequel, `distinct' really means that the directions $\R u_i$ are distinct. The first question we address is the following: if the directions are given, can we find
{\it positive} numbers $c_1,c_2,c_3$ such
that the decomposition of the identity
\begin{equation}\label{eq:decomp}
c_1 \, u_1 \otimes u_1 \;  + \;   c_2 \, u_2 \otimes u_2  \; + \;   c_3 \, u_3 \otimes u_3  \; =\;  \id_{\R^2}
\end{equation}
holds? The answer is yes  provided the vectors are `well enough' distributed in space.

\begin{prop}\label{theo:decomp}
Let $\R u_1$, $\R u_2$ and $\R u_3$ be three distinct directions of $\R^2$. There exists three positive numbers $c_1, c_2$ and $c_3$ such that the decomposition of the identity~\eqref{eq:decomp} holds if and only if the three geometric angles given by the six angular sectors defined by these directions are all strictly smaller than $\frac{\pi}2$.
The $c_i$'s are then given by 
\begin{equation}\label{c}
c_i = \frac{\cos(\te_j-\te_k)}{\sin(\te_j-\te_i)\sin(\te_k-\te_i)} = 1 - \cot(\te_i-\te_j) \cot(\te_k- \te_i) 
\end{equation}
for $(i,j,k)$ a permutation of $(1,2,3)$ and $u_i = \big(\cos(\te_i) , \sin(\te_i)\big)$ for  $i=1,2,3$.
\end{prop}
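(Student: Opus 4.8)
The plan is to convert the matrix identity \eqref{eq:decomp} into a scalar linear system and solve it explicitly. Writing $u_i=(\cos\theta_i,\sin\theta_i)$ and using the double-angle formulas, each projection splits into its trace and trace-free parts as
\[
u_i\otimes u_i=\tfrac12\,\id_{\R^2}+\tfrac12\begin{pmatrix}\cos2\theta_i & \sin2\theta_i\\ \sin2\theta_i & -\cos2\theta_i\end{pmatrix}.
\]
Since every symmetric $2\times 2$ matrix is a multiple of $\id_{\R^2}$ plus a trace-free matrix, equation \eqref{eq:decomp} is equivalent to the three scalar equations $\sum_i c_i=2$, $\sum_i c_i\cos2\theta_i=0$ and $\sum_i c_i\sin2\theta_i=0$ (the first being \eqref{comp} with $n=2$). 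The coefficient matrix has rows $(1,1,1)$, $(\cos2\theta_i)_i$, $(\sin2\theta_i)_i$, and a short product-to-sum computation gives its determinant as $4\sin(\theta_2-\theta_1)\sin(\theta_3-\theta_1)\sin(\theta_3-\theta_2)$. This is non-zero precisely because the three directions are distinct (no two $\theta_i$ differ by a multiple of $\pi$), so the system has a unique solution $(c_1,c_2,c_3)$, positive or not.

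To obtain the explicit values \eqref{c} I would apply Cramer's rule: using the same identities, the numerator for $c_i$ is (up to sign) the $2\times 2$ determinant $2\sin(2\theta_k-2\theta_j)=4\sin(\theta_k-\theta_j)\cos(\theta_k-\theta_j)$ for the appropriate pair $(j,k)$. Dividing by the determinant above and cancelling the common sine factor yields $c_i=\cos(\theta_j-\theta_k)/\bigl(\sin(\theta_j-\theta_i)\sin(\theta_k-\theta_i)\bigr)$, and expanding $\cos(\theta_j-\theta_k)$ with the addition formula turns this into the equivalent cotangent expression in \eqref{c}.

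For the sign analysis I would fix the ordering $\theta_1<\theta_2<\theta_3$ in $[0,\pi)$. Then in each formula the denominator is a product of two sines of angles in $(0,\pi)$, whose sign is explicit, so the positivity of $c_i$ reduces to the sign of a single cosine: one finds $c_1>0\Leftrightarrow\theta_3-\theta_2<\tfrac\pi2$, $c_3>0\Leftrightarrow\theta_2-\theta_1<\tfrac\pi2$, and $c_2>0\Leftrightarrow\theta_3-\theta_1>\tfrac\pi2$, i.e. $\pi-(\theta_3-\theta_1)<\tfrac\pi2$. The three quantities $\theta_2-\theta_1$, $\theta_3-\theta_2$ and $\pi-(\theta_3-\theta_1)$ are exactly the openings of the six angular sectors cut out by the directions $\R u_i$ (each occurring twice, by the symmetry $x\mapsto -x$), and they sum to $\pi$; hence all three $c_i$ are positive if and only if all six sectors are strictly smaller than $\tfrac\pi2$, which is the claim. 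The step needing the most care is precisely this bookkeeping: correctly identifying the three distinct sector openings among the six and matching each one to the positivity of a single $c_i$; the trigonometric simplifications are routine.

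Finally I would record the geometric reading that makes the equivalence transparent. The last two equations say that the origin is a strictly positive convex combination, with weights $c_i/2$, of the points $w_i=(\cos2\theta_i,\sin2\theta_i)$ on the unit circle, i.e. that $0$ lies in the interior of the triangle $w_1w_2w_3$ inscribed in that circle. Such a triangle contains its circumcenter exactly when it is acute, equivalently when each of the three arcs into which the $w_i$ divide the circle is shorter than a semicircle; since these arcs have lengths $2(\theta_2-\theta_1)$, $2(\theta_3-\theta_2)$ and $2(\pi-\theta_3+\theta_1)$, this is once more the condition that every sector be $<\tfrac\pi2$.
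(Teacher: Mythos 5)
Your proof is correct and follows essentially the same route as the paper: reduce~\eqref{eq:decomp} to a $3\times 3$ linear system in $(c_1,c_2,c_3)$, solve it to obtain the explicit formula~\eqref{c}, and read off positivity after normalizing $0\le\theta_1<\theta_2<\theta_3<\pi$. Your double-angle (trace/trace-free) reformulation usefully makes explicit the determinant computation that the paper dismisses as ``readily checked'', and the inscribed-triangle interpretation is a pleasant supplementary way to see the acuteness condition, but it does not change the substance of the argument.
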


\begin{proof}
If  two of the vectors are orthogonal, say $u_1\cdot  u_2 =0$, and if~\eqref{eq:decomp} holds, then 
$0= u_1\cdot u_2 = 0 + 0 + c_3 \, (u_3\cdot u_1) \, (u_3\cdot u_2)$
and therefore, if $c_3>0$ then $u_3\cdot u_1 = 0$ or $u_3\cdot u_2 = 0$. But this implies $u_3=\pm u_2$ or $u_3= \pm u_1$, which is excluded. A genuine three vector situation cannot contain a two vector situation (which is equivalent to an orthonormal basis).

By the assumption, the projection $u_1\otimes u_1$, $u_2\otimes u_2$ and $u_3\otimes u_3$ span
${\mathcal S}_2$, the $3$-dimensional space of symmetric operators on $\R^2$. Therefore the linear operator 
$$(c_1, c_2, c_3) \longto c_1 u_1 \otimes u_1 +  c_2 u_2 \otimes u_2   +  c_3 \, u_3 \otimes u_3  $$
is an isomorphism from $\R^3$ onto ${\mathcal S}_2$.
Write $u_i = \big(\cos(\te_i) \, , \, \sin(\te_i)\big) $ for $i=1,2,3$, 
where all coordinate computations are done in the canonical orthonormal basis of $\R^2$; note that $\te_i-\te_j \neq 0 [\frac{\pi}2]$. Using that
$$u_i\otimes u_i = \left(
\begin{array}{cc}
 \cos(\te_i)^2 &\cos(\te_i)\sin(\te_i) \\
\cos(\te_i)\sin(\te_i) &\sin(\te_i)^2
\end{array}
\right), 
$$
it is readily checked that the unique solution in $\R^3$ of~\eqref{eq:decomp} is given by
$$\textstyle
c_1 = \frac{\cos(\te_2-\te_3)}{\sin(\te_2-\te_1)\sin(\te_3-\te_1)}\, , \quad 
 c_ 2= \frac{\cos(\te_3-\te_1)}{\sin(\te_3-\te_2)\sin(\te_1-\te_2)} \, , \quad
c_3 = \frac{\cos(\te_1-\te_2)}{\sin(\te_1-\te_3)\sin(\te_2-\te_3)}\,  .
$$
It remains to identify when this gives a solution to our problem, i.e. when $c_1, c_2, c_3 >0$. 

All the quantities in the previous equation  remain unchanged if we replace some $\te_i$ by $\te_i+\pi$, which is consistent with the fact that we have been working with directions only. And of course, they also remain inchange by rotations, i.e. by $\theta_i \to \theta_i +\alpha$ for $i=1,2,3$ and $\alpha\in \R$. Therefore, up to a relabeling of the directions,  we can assume that
$0=\te_1 < \te_2 < \te_3 <\pi$.
Then the $c_i's$ are positive if and only if
$$\te_3 - \te_2 < \pi/2 , \quad  \te_3 -0   > \pi/2 , \quad \te_2 -0 <\pi/2 .$$
Rewriting the second condition as
$\pi - \te_3 < \pi/2,$
we get the announced condition on the three angular sectors $\te_2$, $\te_3-\te_2$, $\pi-\te_3$.
\end{proof}

We now investigate the  converse procedure. Given three numbers $c_1,c_2, c_3 \in (0,1)$ such that
\begin{equation}\label{eq:c}
c_1+c_2+c_3 = 2,
\end{equation}
we would like to know whether is possible to find directions $u_i= (\cos(\theta_i) , \sin(\te_i))$ for which  the decomposition of the identity~\eqref{eq:decomp} holds. The answer is yes, and the construction is unique up an isometry of $\R^2$ (which clearly preserves decompositions of the identity).

\begin{prop}\label{theo:decomp2}
For given $c_1,c_2, c_3 \in (0,1)$ satisfying~\eqref{eq:c}, there exists a triple of directions $\R u_1$, $\R u_2$ and $\R u_3$ unique up to isometries such that the decomposition of the identity~\eqref{eq:decomp} holds (the solutions $u_i=(\cos(\theta_i), \sin(\theta_i))$ are given by equation~\eqref{cond0} below). More explicitly, all solutions are obtained by performing an isometry on 
\begin{equation}\label{eq:decomp2}
u_1 \!=\! (1,0) , \ u_2 \!=\! \left(\sqrt{\frac{(1\!-\! c_1)(1\!-\! c_2)}{c_1\, c_2}} , \, \sqrt{\frac{1-c_3}{c_1\, c_2}}\right),\  u_3 \!=\! \left(-\sqrt{\frac{(1\!-\! c_1)(1\!-\! c_3)}{c_1\, c_3}} , \, \sqrt{\frac{1-c_2}{c_1\, c_3}}\right).
\end{equation}
\end{prop}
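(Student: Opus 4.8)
The plan is to linearize the matrix equation~\eqref{eq:decomp} and to recognize the resulting system as the closing condition of a planar triangle. Writing $u_i = (\cos(\te_i), \sin(\te_i))$ and expanding $u_i\otimes u_i$ exactly as in the proof of Proposition~\ref{theo:decomp}, the identity~\eqref{eq:decomp} is equivalent to the three scalar equations $\sum_i c_i\cos^2(\te_i) = 1$, $\sum_i c_i\sin^2(\te_i) = 1$ and $\sum_i c_i\cos(\te_i)\sin(\te_i) = 0$. Taking the sum and the difference of the first two and invoking the double-angle formulas, the system becomes $\sum_i c_i = 2$ together with
\[
\sum_{i=1}^3 c_i\,\big(\cos(2\te_i),\,\sin(2\te_i)\big) = (0,0).
\]
The first relation is precisely the hypothesis~\eqref{eq:c}, so everything reduces to finding angles $\phi_i := 2\te_i$ for which the three vectors of respective lengths $c_1, c_2, c_3$ and directions $\phi_i$ sum to zero.

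This is exactly the condition that $c_1, c_2, c_3$ be the side lengths of a closed triangle, laid out head to tail. Such a triangle exists and is non-degenerate if and only if the strict triangle inequalities hold; since $c_j + c_k = 2 - c_i$, these read $c_i < 1$ for each $i$, which is precisely our assumption $c_i\in(0,1)$. This settles existence (non-degeneracy forcing the three directions to be distinct). For uniqueness I would invoke the side-side-side congruence criterion: a triangle is determined by its ordered side lengths up to an isometry of the plane. Concretely, setting $w_i := c_i(\cos(\phi_i), \sin(\phi_i))$, the law of cosines fixes each pairwise angle, e.g. $\cos(\phi_2 - \phi_1) = (c_3^2 - c_1^2 - c_2^2)/(2c_1 c_2)$, so the configuration $(w_1, w_2, w_3)$ is determined up to a global rotation and a reflection.

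It then remains to transport this uniqueness from the doubled angles $\phi_i$ back to the directions $\R u_i$. Since directions are angles modulo $\pi$, the map $\te \bmod \pi \mapsto 2\te \bmod 2\pi$ is a bijection, under which a rotation $\te_i \to \te_i + \alpha$ of the directions becomes $\phi_i \to \phi_i + 2\alpha$ and a reflection $\te_i \to -\te_i$ becomes $\phi_i \to -\phi_i$. Hence the isometries of the direction configuration correspond exactly to the rotations and reflection appearing above, and the triple $\R u_1, \R u_2, \R u_3$ is unique up to isometry. To extract the explicit representative~\eqref{eq:decomp2}, I would normalize by the isometry sending $u_1$ to $(1,0)$ and placing $u_2$ in the open upper half-plane, then check directly that the stated vectors are unit vectors satisfying the three scalar equations: the identities $|u_2|^2 = 1$ and $\sum_i c_i\cos^2(\te_i) = 1$ both collapse upon substituting $c_j + c_k = 2 - c_i$, while the off-diagonal equation holds because its two nonzero terms are equal and opposite.

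The main obstacle, though essentially bookkeeping, is the careful passage between the direction picture (angles modulo $\pi$) and the doubled-angle, i.e.\ triangle, picture (angles modulo $2\pi$): one must verify that ``unique up to isometry of $\R^2$'' for the directions matches ``unique up to congruence'' of the triangle, so that no spurious solutions are lost or gained. Once the doubling bijection and its action on rotations and reflections are pinned down, this becomes transparent, but it is the single point where the argument requires genuine care rather than computation.
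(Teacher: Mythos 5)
Your argument is correct, but it follows a genuinely different route from the paper. The paper proceeds by formally inverting the formula~\eqref{c} of Proposition~\ref{theo:decomp}, which expresses the $c_i$'s in terms of cotangents of the angle differences: after normalizing $0=\te_1<\te_2<\te_3<\pi$, two of the three resulting equations~\eqref{cond0} pin down $\te_2$ and $\te_3$, and the third is then checked to hold automatically via the cotangent subtraction formula combined with $c_1+c_2+c_3=2$; the coordinates~\eqref{eq:decomp2} are read off at the end. You instead linearize~\eqref{eq:decomp} into the three scalar moment equations and apply the angle-doubling trick, reducing the problem to $\sum_i c_i\,(\cos 2\te_i,\sin 2\te_i)=0$, i.e.\ to closing a triangle with side lengths $c_1,c_2,c_3$. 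This is more conceptual: it makes transparent that the hypothesis $c_i\in(0,1)$ \emph{is} the strict triangle inequality (since $c_j+c_k=2-c_i$), it replaces the paper's consistency verification by the side-side-side congruence criterion (the law of cosines determines each $\cos(2\te_i-2\te_j)$), and it recovers in passing the existence criterion of Proposition~\ref{theo:decomp}. The price is the bookkeeping you correctly flag — matching isometries of the direction configuration (angles mod $\pi$) with rotations and reflections of the doubled-angle configuration (angles mod $2\pi$), and noting that non-degeneracy of the triangle forces the three directions to be distinct — together with the fact that your route does not by itself produce the formulas~\eqref{cond0}, so the explicit representative~\eqref{eq:decomp2} must be verified by direct substitution (which you do, and which checks out using $c_1+c_2+c_3=2$). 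Both proofs are complete; yours trades the paper's trigonometric identity for a classical Euclidean construction.
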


\begin{proof}
Inverting formally~\eqref{c} we get
\begin{gather}\textstyle
\cot(\te_2-\te_3) = \varepsilon \sqrt{\frac{(1-c_2)(1-c_3)}{1-c_1}}   \, , \quad
\cot(\te_3-\te_1)=  \varepsilon\sqrt{\frac{(1-c_3)(1-c_1)}{1-c_2}} \, , \nonumber \\ \textstyle
 \cot(\te_1-\te_2)=  \varepsilon \sqrt{\frac{(1-c_1)(1-c_2)}{1-c_3}}  \, \label{cond0}
 \end{gather}
and $ \varepsilon=\pm 1$. This uniquely determines the directions $\R u_i$ up to isometries. To check this,
first perform a rotation ensuring that
$\te_1 =0$  and $\te_2, \te_3 \in (0,\pi)$.
We still have an invariance by symmetry with respect to the coordinate axis $x=0$, which corresponds to the sign of $\varepsilon$. Thus, without loss of generality, we can impose
\begin{equation}\label{cond1}
0=\te_1 <  \te_2 < \te_3 <\pi .
\end{equation}
The last two equalities in~\eqref{cond0} give that $\cot(\te_2)$ and $\cot(\te_3)$ are of opposite sign, and thus, by~\eqref{cond1},
$\varepsilon = -1$,   $\theta_2 \in (0,\pi/2)$ and  $\te_3 \in (\pi/2 , \pi)$,  these angles being uniquely determined by
\begin{equation}\label{cond2}
\textstyle \cot(\theta_2) =   \sqrt{\frac{(1-c_1)(1-c_2)}{1-c_3}} \quad \textrm{ and } \quad
\cot(\theta_3)= -\sqrt{\frac{(1-c_3)(1-c_1)}{1-c_2}}  \, .
\end{equation}
Note that the first equality of~\eqref{cond0} ensures that $\te_3-\te_2\in (0,\pi/2)$, which  is consistent with the condition in the previous theorem. So long for unicity. It remains now  to check that we indeed get a solution. Equivalently, back to the situation~\eqref{cond1}-\eqref{cond2}, we need to check that once the last two equalities from~\eqref{cond0} are used to uniquely determine the angles $\theta_2$ and $\theta_3$, the first equality  of~\eqref{cond0} is then automatically verified.
This is indeed the case since
$$\textstyle
 \sqrt{\frac{1-c_1}{(1-c_2)(1-c_3)}}  \times \cot(\te_3-\te_2) =\sqrt{\frac{1-c_1}{(1-c_2)(1-c_3)}}  \times \frac{\cot(\te_2)\cot(\te_3)+1}{\cot(\te_2)- \cot(\te_3)}=\frac{c_1}{(1-c_2) + (1-c_3)} \, .
$$
Therefore the compatibility condition~\eqref{eq:c} yields the desired equation.  
 Finally, the solution~\eqref{cond1}-\eqref{cond2} rewrites as~\eqref{eq:decomp2} in coordinates
 and the proof is complete.
\end{proof}

We can now derive the main Theorem stated in the introduction.

\begin{proof}[Proof of Theorem~\ref{maintheo}]
Introducing, for $i=1,2,3$, $c_i =  \frac1{p_i}$,
the unit vectors $u_i=(\cos(\theta_i), \sin(\theta_i))$ of the previous proposition can be rewritten as $u_1 = (1,0)$, 
$$\textstyle
\ u_2 \!=\! \left(\sqrt{(p_1-1)(p_2-1)} , \, \sqrt{\frac{p_1\, p_2 \, (p_3-1)}{p_3}}\right),\  u_3 \!=\! \left(-\sqrt{(p_1-1)(p_3-1)} , \, \sqrt{\frac{p_1\, p_3\, (p_2-1)}{p_2}}\right).
$$
Then the result for entropy follows from Proposition~\ref{special}. Next note that it is enough to prove~\eqref{int0} in the case of nonnegative functions, and therefore the integral inequality to be proven is 
$$ \iint f(x)\,  g(\cos(\te_2) x + \sin(\te_2) y)\, h(\cos(\te_3) x + \sin(\te_3) y) \, d\mu(x)d\mu(y)  \le \|f\|_{L^{p_1}}\, \|g\|_{L^{p_2}(\mu)}\, \|h\|_{L^{p_3}}(\mu)$$
for $f,g,h:\R\to \R^+$. But this inequality holds as dual of inequality~\eqref{sub0}
by virtue of Proposition~\ref{prop:dual}. The cases of equality follow from the general considerations given in the introduction.
\end{proof}


\medskip

\section{Sharp Young's convolution inequality}
Here we work with the Lebesgue measure.
Let $p,q,r>1$ be such that
\be\label{coeffYoung}
\frac1p + \frac1q = 1+ \frac1r \,  ,
\ee
which can be rewritten as
$$ \frac1{r'} + \frac1p + \frac1q  = 2.$$
Apply then Theorem~\ref{maintheo} with  $p_1 = {r'}\, ,\  p_2 = p \, , \ p_3 = q$.
The angles $\te_2, \te_3$, or equivalently the unit vectors  $u_i=(\cos(\te_i), \sin(\te_i))$, $i=1,2,3$, given by Theorem~\ref{maintheo} are 
\be\label{condYoung}
u_1 \!=\! (1,0) , \ u_2 \!=\! \left(\sqrt{(r'-1)(p-1)} , \, \sqrt{\frac{r'\, p}{q'}}\right),\  u_3 \!=\! \left(-\sqrt{(r'-1)(q-1)} , \, \sqrt{\frac{r'\, q}{p'}}\right).
\ee
Therefore, for any random vector $(W,Z)\in \R^2$ (with finite entropy),
\begin{equation}\label{sub1}
\frac1{r'} \, S\big(W\big) + \frac1p\,  S\big(\cos(\te_2) W + \sin(\te_2) Z\big)  +  \frac1q \, S\big(\cos(\te_3) W + \sin(\te_3) Z\big)  \le S\big(W,Z\big).
\end{equation}
We would like to have as random variables in the left-hand side multiples of $W$, $W-Z$ 
and $Z$, respectively. To this task, first perform a linear transformation leaving $W$ invariant so that the last variable is a multiple of $Z$, and then a diagonal linear operator so that the second one is a multiple of $W-Z$. Readily, perform the linear transformation $(X,Y)= A (W,Z)$ with 
$$A:=\left(
\begin{array}{cc}
 \cot (\te_3) & 1 \\
 \cot (\te_3)-\cot (\te_2) & 0
\end{array}
\right).
$$
Then, using~\eqref{entropyinv}, it follows that~\eqref{sub1} is equivalent to the following subadditivity inequality: for every random variables $X$ and $Y$,
\begin{gather*}
\frac1{r'} \, S\big( \frac1{\cot(\te_3)-\cot(\te_2)} Y \big) + \frac1p\,  S\big( \sin(\te_2) (X-Y) \big)  +  \frac1q \, S\big(\sin(\te_3) X \big) \\
 \le S\big(X,Y\big)+ \log\big|\cot(\te_3)-\cot(\te_2)\big|.
\end{gather*}
This inequality is also equivalent, using again the scaling of entropy (in dimension $1$) to
\begin{equation}\label{sub2}
\frac1{r'} \, S\big(X\big) + \frac1p\,  S\big(X-Y)\big)  +  \frac1q \, S\big( Y \big)  \le S\big(X,Y\big) +D 
\end{equation}
with
\begin{eqnarray*}
D&=& \Big(1-\frac1{r'}\Big) \log\big|\cot(\te_3)-\cot(\te_2)\big| + \frac1{p}\log\sin(\te_2) +\frac1{q}\log\sin(\te_3).  \\
\end{eqnarray*}
Using that $\frac1{p'}+\frac1{q'}=\frac1{r'}$, it follows that
$ \cot(\te_2)-\cot(\te_3) = \frac1{r'} \sqrt{\frac{p'q'}r}$ and 
$$
D= -\frac1{r}\log\sqrt{r}+\frac1{r'}\log\sqrt{r'} + \frac1{p}\log\sqrt{p}-\frac1{p'}\log\sqrt{p'} + \frac1{q}\log\sqrt{q}-\frac1{q'}\log\sqrt{q'}.
$$
For $t>1$, set $C_t:=\sqrt{\frac{t^{1/t}}{t'^{1/t'}}}$
where as before $t'$ is the conjugate of $t$.
We have thus derived the following classical result.

\begin{theo}[Sharp Young's convolution inequality] Let $p,q,r>1$ satisfy~\eqref{coeffYoung}.
For every random variables $X,Y\in \R$,
$$\frac1{r'} \, S\big(X\big) + \frac1p\,  S\big(X-Y\big)  +  \frac1q \, S\big( Y \big)  \le S\big(X,Y\big) + \log\left(\frac{C_p \, C_q}{C_r}\right).$$
Furthermore, the inequality is sharp:  equality holds if and only if  $(X,Y)\in\R^2$ is a Gaussian vector whose covariance matrix is a multiple of $A^\ast A$.

Equivalently, for every $f\in L^p(\R)$ and $g\in L^q(\R)$,
\be\label{Young}
\|f\ast g\|_{L^r(\R)} \le \frac{C_p \, C_q}{C_r}\, \|f\|_{L^p(\R)} \, \|g\|_{L^q(\R)}.
\ee
\end{theo}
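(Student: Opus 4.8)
The entropic half of the statement has in fact already been assembled in the running computation above: it is exactly the inequality~\eqref{sub2}, and the only thing left to settle for this half is to identify the constant $D$ obtained there with $\log(C_pC_q/C_r)$. The plan is to expand $\log C_t=\frac1t\log\sqrt t-\frac1{t'}\log\sqrt{t'}$ straight from the definition of $C_t$, so that $\log C_p+\log C_q-\log C_r$ becomes a sum of six terms of the form $\pm\frac1s\log\sqrt s$ with $s\in\{p,p',q,q',r,r'\}$. This is precisely the closed expression found for $D$ after inserting the value $\cot(\te_2)-\cot(\te_3)=\frac1{r'}\sqrt{p'q'/r}$ and using the conjugacy relation $\frac1{p'}+\frac1{q'}=\frac1{r'}$ that follows from~\eqref{coeffYoung}. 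Thus $e^D=C_pC_q/C_r$, and the first displayed inequality of the theorem is nothing but~\eqref{sub2}.

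For the convolution form I would dualize~\eqref{sub2} through Proposition~\ref{prop:dual}. Reading $(x,y)\in\R^2$ as the integration variable, the three linear forms attached to $S(X)$, $S(X-Y)$, $S(Y)$ are $(x,y)\mapsto x$, $(x,y)\mapsto x-y$, $(x,y)\mapsto y$, carried by the weights $\frac1{r'},\frac1p,\frac1q$. The dual Brascamp--Lieb inequality therefore reads
\[\int_{\R^2} f_1(x)^{1/r'}\,f_2(x-y)^{1/p}\,f_3(y)^{1/q}\,dx\,dy\le e^D\Big(\int f_1\Big)^{1/r'}\Big(\int f_2\Big)^{1/p}\Big(\int f_3\Big)^{1/q}\]
for nonnegative $f_1,f_2,f_3$. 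Substituting $f_1=h^{r'}$, $f_2=f^{\,p}$, $f_3=g^{\,q}$ turns the right-hand side into $e^D\,\|h\|_{r'}\|f\|_p\|g\|_q$, while the left-hand side becomes $\iint h(x)f(x-y)g(y)\,dx\,dy=\int h\,(f\ast g)$. Taking the supremum over $h\ge0$ with $\|h\|_{r'}=1$ extracts $\|f\ast g\|_r$ by $L^r$--$L^{r'}$ duality, which gives~\eqref{Young} for nonnegative $f,g$; the general signed or complex case follows from the pointwise bound $|f\ast g|\le|f|\ast|g|$.

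Finally, the equality cases and the sharpness of the constant are inherited rather than reproved: the optimizers of Theorem~\ref{maintheo} are i.i.d.\ Gaussians in the variables of~\eqref{sub1}, and transporting them through the invertible map $(X,Y)=A(W,Z)$ (whose only effect on the inequality was the additive entropy shift~\eqref{entropyinv}) yields Gaussian vectors with covariance a multiple of $AA^\ast$, and dualizing once more produces the Gaussian optimizers of~\eqref{Young}. I expect no conceptual obstacle here, since the whole argument is just the duality of Proposition~\ref{prop:dual} applied to an already-proven inequality. The genuinely delicate points are arithmetic: reconciling $D$ with $\log(C_pC_q/C_r)$ through the two identities above, and keeping the three exponents $r',p,q$ matched to the correct linear forms in the substitution so that the left-hand integral collapses to a bona fide convolution tested against $h$. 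One should also state consistently the reduction to nonnegative $f,g$ and the covariance bookkeeping (whether it reads as $AA^\ast$ or $A^\ast A$, depending on the column-vector convention).
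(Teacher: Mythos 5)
Your proposal follows essentially the same route as the paper: the entropic statement is exactly~\eqref{sub2} with the constant $D$ identified as $\log(C_p C_q/C_r)$, and the convolution form is obtained by dualizing via Proposition~\ref{prop:dual} with the substitution $f_1=h^{r'}$, $f_2=f^p$, $f_3=g^q$ and $L^r$--$L^{r'}$ duality, with equality cases transported through $A$ from Proposition~\ref{special}. The argument is correct, and your remark about the $AA^\ast$ versus $A^\ast A$ bookkeeping is well taken (pushing a covariance $\lambda\,\id$ forward by $A$ gives $\lambda AA^\ast$ in the column-vector convention).
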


For the equality cases in the entropic inequality, note that
in view of Proposition~\ref{special}  equality  holds in~\eqref{sub1} if and only if $(W,Z)$ is a Gaussian vector with covariance a multiple of $\id_{\R^2}$, and $(X,Y)=A(W,Z)$. Next, note that Young's inequality~\eqref{Young} 
reduces to the case of nonnegative functions, which is then equivalent to following dual form of the entropic inequality: 
\begin{equation} \label{BLYoung}
\iint f(x) g(x-y) h(y) \, dx\, dy  \le \frac{C_p \, C_q}{C_r}\, \|f\|_{L^p(\R)} \, \|g\|_{L^q(\R)} \, \|g\|_{L^q(\R)}
\end{equation}
for every nonnegative functions $f,g,h:\R\to\R^+$. It is possible to deduce the equality cases in this inequality (and therefore in Young's convolution inequality) from the ones in the entropic inequality, as described in~\cite{CC} (we get some well chosen Gaussian functions). Actually, it is also possible to use that inequality~\eqref{BLYoung} is obtained by rescaling the functions (after the change of variables $(x,y)=A(w,z)$) in the Brascamp-Lieb inequality dual to~\eqref{sub1}, where equality holds if and only the functions are Gaussian with the same covariance.

The sharp Young convolution inequality~\eqref{Young}  was obtained independently by Beckner~\cite{Beckner} and Brascamp and Lieb~\cite{BL}. Their proofs rely on rearrangements of functions and tensorization arguments. Barthe~\cite{B1} gave a new (simpler) proof using a mass transportation argument.  One of the advantage of the geometric entropic approach used here is that it makes it possible to extend it to other contexts.

\medskip

\section{Shannon's inequality}

We continue to work with the Lebesgue measure. We now aim at reproducing the following classical result in Information Theory (see~\cite{DCT} for details).

\begin{theo}[Shannon's inequality]
Let $X$ and $Y$ be two \emph{independent} random variables. Then
\be\label{shannon}
S\left(\frac{X+Y}{\sqrt 2}\right) \le \frac{S(X) + S(Y)}2 \,  .
\ee
\end{theo}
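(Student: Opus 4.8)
The plan is to obtain Shannon's inequality~\eqref{shannon} as a limiting case of the subadditivity inequality~\eqref{sub0} from Theorem~\ref{maintheo}, exploiting the extra freedom that independence of $X$ and $Y$ provides. The idea is that~\eqref{shannon} is the ``balanced'' case in which the two summands are symmetric, so I would first look for a one-parameter family of admissible exponents $(p_1,p_2,p_3)$ degenerating appropriately. The natural choice is to send $p_1 \to \infty$ (equivalently $c_1 = 1/p_1 \to 0$, so that the direction $u_1$ drops out of the decomposition) while keeping $p_2 = p_3 = p$, which by the constraint $\frac1{p_1}+\frac1{p_2}+\frac1{p_3}=2$ forces $\frac2p \to 2$, i.e. $p \to 1$. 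In this limit the two remaining angles $\theta_2,\theta_3$ should tend to symmetric directions making equal angles of $\pi/4$ with a fixed axis, so that $\cos\theta_2\,X+\sin\theta_2\,Y$ and $\cos\theta_3\,X+\sin\theta_3\,Y$ both converge to multiples of $(X\pm Y)/\sqrt2$.

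Concretely, I would carry out the following steps. First, substitute $p_2=p_3=p$ and the forced value of $p_1$ into the explicit formulas for $(\cos\theta_2,\sin\theta_2)$ and $(\cos\theta_3,\sin\theta_3)$ given in the statement of Theorem~\ref{maintheo}, and compute their limits as $p \downarrow 1$. I expect $\theta_2 \to \pi/4$ and $\theta_3 \to 3\pi/4$, so that the two averaged variables become $(X+Y)/\sqrt2$ and $(-X+Y)/\sqrt2$ respectively. Second, I would write inequality~\eqref{sub0} for the symmetric exponents, namely
\begin{equation*}
\frac1{p_1}\,S(X) + \frac1p\,S\!\big(\cos(\theta_2)X+\sin(\theta_2)Y\big) + \frac1p\,S\!\big(\cos(\theta_3)X+\sin(\theta_3)Y\big) \le S(X,Y),
\end{equation*}
and pass to the limit $p\downarrow 1$. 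The term $\frac1{p_1}S(X)\to 0$, so it disappears. Third, I would use independence of $X$ and $Y$ in two places: on the right-hand side $S(X,Y)=S(X)+S(Y)$, and the two left-hand entropies converge (after the limiting identification of the angles) to $S\!\big(\tfrac{X+Y}{\sqrt2}\big)$ and $S\!\big(\tfrac{Y-X}{\sqrt2}\big)$. By symmetry of the entropy under $t\mapsto -t$, the latter equals $S\!\big(\tfrac{X-Y}{\sqrt2}\big)$, but what I actually want is a clean bound on $S\!\big(\tfrac{X+Y}{\sqrt2}\big)$ alone.

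The main obstacle is therefore twofold. First, a genuine convergence argument is needed to justify that $S$ of the convex combinations converges to $S$ of the limiting variables as the angles move; this requires either a continuity/semicontinuity property of entropy under the relevant convergence of laws or a direct estimate, and care is needed because entropy is not continuous in full generality. Second, and more essentially, the limiting inequality produced this way controls the \emph{sum} $S\!\big(\tfrac{X+Y}{\sqrt2}\big)+S\!\big(\tfrac{X-Y}{\sqrt2}\big)$ by $S(X)+S(Y)$, whereas~\eqref{shannon} bounds just one of the two terms. To extract the single-term statement I would exploit the freedom to break symmetry: rather than setting $p_2=p_3$, I would instead let only $\theta_3 \to \pi/2$ (the direction of $Y$) by a suitable choice of exponents, so that $\cos(\theta_3)X+\sin(\theta_3)Y \to Y$ and the corresponding entropy term combines with $S(X,Y)=S(X)+S(Y)$ to cancel the spurious contribution, leaving exactly $\frac1p S\!\big(\tfrac{X+Y}{\sqrt2}\big)\le \frac12\big(S(X)+S(Y)\big)$ in the limit. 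Identifying the precise degenerating family that achieves this cancellation, and checking it stays inside the admissible region $p_1,p_2,p_3>1$ of Theorem~\ref{maintheo} up to the boundary, is the delicate computational heart of the argument.
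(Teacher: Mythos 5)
Your first limiting family ($p_2=p_3=p\downarrow 1$, $p_1\to\infty$) indeed degenerates to the orthonormal--basis case $\theta_2\to\pi/4$, $\theta_3\to 3\pi/4$, and as you correctly observe it only yields
$S\big(\tfrac{X+Y}{\sqrt2}\big)+S\big(\tfrac{X-Y}{\sqrt2}\big)\le S(X)+S(Y)$,
which is the classical subadditivity~\eqref{eq:classicalsub} and recovers~\eqref{shannon} only when $Y$ is symmetric. The problem is that your proposed repair cannot work as a plain limit either. If you want the three directions to tend to $e_1$, $\tfrac{e_1+e_2}{\sqrt2}$ and $e_2$ (so that the first and third entropy terms combine with $S(X,Y)=S(X)+S(Y)$ to cancel), then the constraint $c_1+c_2+c_3=2$ of~\eqref{eq:c} together with the formulas of Proposition~\ref{theo:decomp} forces $c_1\to1$, $c_3\to1$ and hence $c_2\to0$: the coefficient in front of $S\big(\tfrac{X+Y}{\sqrt2}\big)$ necessarily vanishes, and the limiting inequality collapses to the identity $S(X)+S(Y)\le S(X,Y)$. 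There is no admissible degenerating family for which the middle coefficient stays bounded away from $0$ while the spurious term disappears, so the ``clean bound $\frac1p S\big(\tfrac{X+Y}{\sqrt2}\big)\le\frac12(S(X)+S(Y))$ in the limit'' that you aim for is unattainable.

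The missing idea is that Shannon's inequality is not the \emph{limit} of the family of subadditivity inequalities but its \emph{derivative} at the degenerate configuration. The paper takes $u_1=(\cos s,\sin s)$, $u_2=\big(\tfrac1{\sqrt2},\tfrac1{\sqrt2}\big)$, $u_3=(\sin s,\cos s)$ with $s\to0^-$; Proposition~\ref{theo:decomp} gives $c_1=1+2s+o(s)$, $c_2=-4s+o(s)$, $c_3=1+2s+o(s)$, so that at order $0$ in $s$ the two sides of the subadditivity inequality agree exactly (this is $S(X)+S(Y)=S(X,Y)$ for independent variables), and the order-$s$ term, which carries the coefficient $-4s$ of $S\big(\tfrac{X+Y}{\sqrt2}\big)$, yields~\eqref{shannon} upon dividing by $-s>0$. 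To make this expansion legitimate one needs more than semicontinuity of the entropy: after regularizing by an independent Gaussian one uses the quantitative estimate $S(X+\var\, Y)=S(X)+\textrm{O}(\var^2)$ of~\eqref{DLentropy} to see that $S(\cos(s)X+\sin(s)Y)=S(X)+o(s)$, so that the drift of the outer directions does not pollute the first-order term. Your proposal correctly locates the difficulty but does not contain this first-order-expansion mechanism, which is the actual content of the proof.
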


It is well known that in the case $Y$ (say) is symmetric $Y\sim -Y$, then Shannon's inequality follows from the classical subadditivity of the entropy~\eqref{eq:classicalsub} since
$$ 2 S\left(\frac{X+Y}{\sqrt 2}\right)  = S\left(\frac{X+Y}{\sqrt 2}\right)  + S\left(\frac{X-Y}{\sqrt 2}\right)  \le S(X,Y) = S(X) + S(Y)$$
where the last equality expresses the independence of $X$ and $Y$. However this situation is misleading since in the general case Shannon's inequality seems to be different in nature than an inequality of subadditivity of entropy. One of the obstacle is that we would like to use a decomposition of the identity with the basis vectors $e_1$ and $e_2$ together with $\frac{e_1 +e_2}{\sqrt 2}$. But this is not possible. We shall instead approximate such a situation.

Recall that if $G$ stands for a standard Gaussian variable independent of all the variables considered here, then if $X$ has finite entropy, $S(X+ \var\,  G ) \to S(X)$ when $\var\to 0$. Therefore we can restrict our study to the case where $X$ and $Y$ have smooth densities (with sub-gaussian tails).  For such regular variables, it is well known that when $\var \to 0$,
\be\label{DLentropy}
S(X+\var\, Y) = S(X) + \textrm{O}(\var^2).
\ee
Using the notation $u(\theta)= (\cos(\theta), \sin(\theta))$, introduce for fixed
$s\in (-\frac{\pi}2, 0)$ ($s$ will later  tend to $0^-$) the unit vectors 
$$u_1 =u(s), \quad u_2 = u(\textstyle{\frac{\pi}4}), \quad u_3 = u(\textstyle{\frac{\pi}2} - s).$$
These vectors define three directions
satisfying the assumption of Proposition~\ref{theo:decomp}. Let $c_1, c_2, c_3\in (0,1)$
be the associated coefficients for which there is a decomposition of the identity so that, by Proposition~\ref{special},
\be\label{shannonsub}
c_1\, S\big(\cos(s) X + \sin(s) Y\big) + c_2\, S\big(\textstyle{\frac{X+Y}{\sqrt 2}}\big) + c_3 \, S\big(\sin(s) X + \cos(s) Y\big) \le S(X,Y)
\ee
for every random variables $X$ and $Y$ with finite entropy. By Proposition~\ref{theo:decomp}, as $s\to 0^-$,
\begin{gather*}
c_1 = \frac{\cos(s)}{\sin(\textstyle{\frac{\pi}4}-s)\sin(\textstyle{\frac{\pi}2}-2s)} = 1 + 2 s + o(s), \quad
c_2= \frac{\cos(\textstyle{\frac{\pi}2}-2s)}{\sin(\textstyle{\frac{\pi}4}-s)\sin(s-\textstyle{\frac{\pi}4})}
 = -4s+o(s), \\
c_3 = \frac{\cos(\textstyle{\frac{\pi}2}-s)}{\sin(2s-\textstyle{\frac{\pi}2})\sin(s)} =  1 + 2 s +o(s). 
\end{gather*}
Note also  that when $X$ and $Y$ are (regular enough) random variables and $s\to 0^-$ we have, in view of~\eqref{DLentropy}, that
$$S\big(\cos(s) X + \sin(s) Y\big) = S\Big(X + \frac{\sin(s)}{\cos(s)} Y\Big) - \log\cos(s) = S(X) + o(s)$$
and similarly $ S\big(\sin(s) X + \cos(s) Y\big) = S(Y) + o(s)$.
Therefore, making a Taylor expansion in~\eqref{shannonsub} when $X$ and $Y$ are independent
and $s\to 0^-$, it follows that
$$S(X) + S(Y) -s\Big[ 4 S\big(\textstyle{\frac{X+Y}{\sqrt 2}}\big) -2 S(X) - 2 S(Y) \Big]  + o(s) \le S(X,Y)= S(X)+ S(Y).$$
The first order in $s<0$  gives the desired Shannon inequality.

\begin{rem}
In view of the duality between entropy and Brascamp-Lieb inequalities, one could wonder if the Shannon inequality admits a dual form. However, if we start with the Brascamp-Lieb inequality, in the same situation as above, and perform the Taylor expansion there, then we end up again with the Shannon inequality. We are in a situation where the  entropic and Brascamp-Lieb inequalities coincide at the first order.
\end{rem}

It should be noted that one can prove along the same lines the Blachmann-Stam inequality (cf.~\cite{DCT}):
\be\label{BS}
I\left(\frac{X+Y}{\sqrt 2}\right) \le \frac{I(X) + I(Y)}2 
\ee
for independent random variables with finite Fisher information. Indeed, the decomposition of the identity obtained above and Proposition~\ref{special} and~\eqref{infosub} give the result once it has been noted that for regular enough random variables, as for entropy,
$I(X+\var\, Y) = I(X) + \textrm{O}(\var^2)$ as $\var\to 0$.
Moreover, for independent random variables $I(X,Y)=I(X)+I(Y)$. Note that by the scaling of information, inequality~\eqref{BS} is commonly rewritten as
$I\left(X+Y\right) \le I(X) + I(Y) .$

Finally, we would like to mention that we could as well have started from Theorem~\ref{maintheo}. A first order Taylor expansion when $p_2=p_3\to 2$ (and therefore $p_1\to 1$) in~\eqref{sub0} gives again the Shannon
inequality. Having derived the sharp Young inequality from~\eqref{sub0}, this procedure is reminiscent of Dembo's proof of Shannon's inequality which consisted in a Taylor expansion in~\eqref{Young} around $p=q=2$ (see~\cite{DCT}).

\medskip


\section{Hypercontractivity and logarithmic Sobolev inequality}

In this section, the measure $\mu=\gamma$ will be the Gaussian measure and $\mu_n=\gamma_n$. 

Assume we are given $p,q$ with
\be\label{hypcond1}
1<p < q
\ee
and set $\theta\in [0, \frac{\pi}{2})$ such that
\be \label{hypcond2}
\cos(\theta)= \sqrt{\frac{p-1}{q-1}} \,.
\ee
Write as before $t'=t/(t-1)$ for $t > 1$.  Since $q > p$, let $r\in [1,q)$ be such that
$\frac1p = \frac1q + \frac1{r'}$, or equivalently 
$$\frac1{q'} + \frac1p +  \frac1{r} =2.$$
Then, introducing $\xi \in (\frac{\pi}2, \pi)$ such that
\be\label{hypcond3} 
\cos(\xi)= -\sqrt{\frac{r-1}{q-1}} \, ,
\ee
the angles $\theta, \xi$ are exactly the ones associated to the  triple
$$p_1=q'\; ,\quad  p_2=p \; ,  \quad p_3=r$$
in Theorem~\ref{maintheo} ($\theta_1= \theta, \theta_2= \xi$).
Consequently, for every random variables $X,Y$,
\be\label{dualhyp1}
\frac1{q'} S_\gamma (X) + \frac1p S_\gamma\big(\cos(\theta) X + \sin(\theta) Y \big) + \frac1r S_\gamma\big(\cos(\xi) X+\sin(\xi)Y\big) \le S_{\gamma_2}(X,Y) .
\ee
We emphasize in the next proposition the corresponding convolution inequality~\eqref{int0}
of independent interest (as a stronger statement than the classical hypercontractivity).

\begin{prop}[Hypercontractivity]  \label{prop:hyp}
Let $p,r>1$. If $\frac1p + \frac1r = 1+ \frac1q$ and~\eqref{hypcond2}-\eqref{hypcond3} hold, then for every functions $f\in L^p(\gamma)$ and $ g\in L^r(\gamma)$,
$$ \left\|\int f\big(\cos(\theta) x + \sin(\theta) y \big)\; g\big(\cos(\xi) x + \sin(\xi) y \big) \;d\gamma(y)\right\|_{L^q(d\gamma(x))} \le \|f\|_{L^p(\gamma)}\, \|g\|_{L^{r}(\gamma)},
$$
with equality if and only if $f,g$ are of constant sign with $|f(x)|^p d\gamma(x)= K_1 \, e^{-\lambda |x-a_1|^2}dx$ and  $|g(x)|^r d\gamma(x)= K_2 \, e^{-\lambda |x-a_2|^2} dx$, $K_1,K_2, \lambda\ge 0$, $a_1,a_2\in \R$.
\end{prop}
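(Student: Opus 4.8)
The plan is to recognize this proposition as nothing more than the integral form \eqref{int0} of Theorem~\ref{maintheo}, specialized to the Gaussian measure $\mu=\gamma$ under the parameter dictionary
\[
p_1 = q', \quad p_2 = p, \quad p_3 = r, \quad \theta_2 = \theta, \quad \theta_3 = \xi .
\]
The entropic inequality \eqref{dualhyp1} is precisely \eqref{sub0} written for these values, and Theorem~\ref{maintheo} already asserts the equivalence of \eqref{sub0} with its convolution form \eqref{int0}. So the only genuine task is to check that the hypotheses of Theorem~\ref{maintheo} are met and that \eqref{int0} then reads verbatim as the claimed inequality.

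First I would verify the summability condition $\frac{1}{p_1}+\frac{1}{p_2}+\frac{1}{p_3}=2$. Using $\frac{1}{q'}=1-\frac{1}{q}$, this is equivalent to $\frac{1}{p}+\frac{1}{r}=1+\frac{1}{q}$, the standing assumption. Next I would match the angles: since $q'-1=\frac{1}{q-1}$, the defining relation $\cos(\theta_2)=\sqrt{(p_1-1)(p_2-1)}$ of Theorem~\ref{maintheo} becomes $\cos(\theta)=\sqrt{(q'-1)(p-1)}=\sqrt{\frac{p-1}{q-1}}$, recovering \eqref{hypcond2}, while $\cos(\theta_3)=-\sqrt{(p_1-1)(p_3-1)}=-\sqrt{\frac{r-1}{q-1}}$ recovers \eqref{hypcond3}; the prescribed sine components then force the vectors $u_i$ to coincide. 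Finally, $p_1=q'$ gives $p_1'=q$, so the exterior norm in \eqref{int0} is the $L^q(\gamma)$ norm, and the inequality is exactly the stated one after the harmless relabeling of the functions $g,h$ of \eqref{int0} as the present $f,g$.

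The equality cases require no new argument. The characterization in Theorem~\ref{maintheo} forces $|g|^{p_2}\,d\mu=K_2\,e^{-\lambda(t-a_2)^2}\,dt$ and $|h|^{p_3}\,d\mu=K_3\,e^{-\lambda(t-a_3)^2}\,dt$; specializing to $\mu=\gamma$, $p_2=p$, $p_3=r$ yields precisely the Gaussian densities claimed for $f$ and $g$, up to renaming the constants.

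In truth there is no serious obstacle: all the substance---the decomposition of the identity of $\R^2$ from Proposition~\ref{theo:decomp2}, the subadditivity of entropy from Proposition~\ref{special}, and the entropy/Brascamp--Lieb duality of Proposition~\ref{prop:dual}---is already packaged inside Theorem~\ref{maintheo}. The only point demanding care is the bookkeeping: confirming the identity $q'-1=1/(q-1)$ that converts the angle formulas, and checking that the three parameters $q',p,r$ all exceed $1$, which holds because $q>1$ and because $p<q$ forces $r>1$ through $\frac{1}{p}=\frac{1}{q}+\frac{1}{r'}$, so that Theorem~\ref{maintheo} genuinely applies.
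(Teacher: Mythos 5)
Your proposal is correct and is essentially the paper's own argument: the authors likewise obtain the proposition by plugging $p_1=q'$, $p_2=p$, $p_3=r$ into Theorem~\ref{maintheo} and observing that the angles of \eqref{hypcond2}--\eqref{hypcond3} are exactly $\theta_2,\theta_3$ there (via $q'-1=1/(q-1)$), so that \eqref{int0} with $p_1'=q$ is the stated inequality. Your bookkeeping checks are all sound and in fact slightly more explicit than what the paper records.
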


Indeed, this result contains the hypercontractivity inequality for the Hermite semi-group 
$$\D P_\theta(f)(x):= \int f\big(\cos(\theta) x + \cos(\theta) y \big) \, d\gamma(y).$$ 
(One may work as well with  the Ornstein-Uhlenbeck semi-group 
$\overline{P}_t f := e^{tL}f = P_{\arccos (e^{-t})} f$). 
Proposition~\ref{prop:hyp} applied with $g\equiv 1$ namely indicates
that under~\eqref{hypcond1}-\eqref{hypcond2}, 
\be\label{hyp}
\forall f\in L^p(\gamma), \qquad  \|P_\te f\|_{L^q(\gamma)} \le  \|f\|_{L^p(\gamma)}.
\ee
Equality holds iff $f$ is exponential,  $f(x) = K e^{-a\, x}$ ($\lambda=1/2$ since $g=1$ in the Proposition).

If we rather work at the level entropic inequalities, first note that for every random variable $Z$, $S_{\gamma}(Z) \ge 0$, with equality if $Z$ is a standard Gaussian variable. Therefore, inequality~\eqref{dualhyp1} implies that, for every random variables $X,Y$,
\be\label{dualhyp2}
\frac1{q'} S_\gamma (X) + \frac1p S_\gamma\big((\cos(\theta) X + \sin(\theta) Y\big) \le S_{\gamma_2}(X,Y) ,
\ee
which is dual to the following Brascamp-Lieb inequality, equivalent to~\eqref{hyp}: for every functions $g,f:\R \to \R^+$, 
$$\iint g(x) \; f\big(\cos(\theta) x + \sin(\theta) y \big)\;  d\gamma(x)d\gamma(y) \le \|f\|_{L^p(\gamma)}\, \|g\|_{L^{q'}(\gamma)}.$$

\begin{rem}[Particularity of the Gaussian case]
It is worth noting that in the Gaussian case, inequalities~\eqref{entropysub2} and~\eqref{BL} hold under the weaker condition
\be\label{dec3}
\sum_{i=1}^m c_i \, u_i\otimes u_i \le {\rm Id}_{\R^n}.
\ee
(So here we can simply use that
$\frac1{q'} e_1\otimes e_1 + \frac1p u(\theta)\otimes u(\theta) \le {\rm Id}_{\R^2}$, instead of `forgetting' terms, as we did above.)
The reason is the following (this was also noted in the spherical case in~\cite{BCM} and it is in fact a general feature when working on a probability space). From the explanations given in the introduction, it is clear that~\eqref{dec3} is always sufficient to get~\eqref{infosub}. When integrating along the Heat semi-group,  it is necessary to rescale in order to obtain asymptotically a standard Gaussian. So the condition~\eqref{comp} is there crucial (see~\cite{CC}). But in the Gaussian case, there is no need to rescale when integrating along the Ornstein-Uhlenbeck semi-group, and so we can indeed derive inequalities~\eqref{entropysub2} and~\eqref{BL} from~\eqref{dec3}. 
Alternatively, starting from~\eqref{dec3}, we can complete the self-adjoint operator on the RHS in order to get a decomposition of the identity, by adding  some unit vectors  $u'_j$ and coefficients $c'_j$. We then simply apply~\eqref{entropysub2} and~\eqref{BL} in the case of this decomposition, but with $X_j= G$ (standard Gaussian independent of the rest) and $f_j\equiv 1$, respectively, for the added indices.
\end{rem}

It is well known that the Gaussian logarithmic Sobolev inequality is equivalent to the hypercontractivity inequality~\eqref{hyp} \cite{G}. To derive the logarithmic Sobolev inequality, one can differentiate~\eqref{hyp} at $\te=0$.
Let us explain how it is even easier to see this implication when working with the dual entropic form. 
Recall that for every random variable $Z$, $S_\gamma(Z)\ge 0$ with equality if $Z$ is a standard Gaussian variable. Let $X$ be a random variable and $G$ be a standard Gaussian variable independent of $X$. For $\theta\in [0,\frac{\pi}2)$, set 
$$P_\theta X:= \cos(\te)X + \sin(\te) G.$$
Then, inequality~\eqref{dualhyp2} gives that, under~\eqref{hypcond1}-\eqref{hypcond2},
$$\frac{1}{q'} S_\gamma(X) + \frac1p S_\gamma(P_\te X) \le  S_{\gamma_2}(X,G) = S_\gamma(X) $$
which rewrites as 
$\D S_\gamma(P_\te X)\le  \frac{p}q S_\gamma(X). $
Therefore, for any $\theta\in [0,\frac{\pi}2)$ and $q>1$, by picking the appropriate $p$ verifying~\eqref{dualhyp2},
$S(P_\te X)\le  \frac{1+(q-1)\cos^2(\te)}q S(X)$.
Letting $q\to +\infty$,
$$
S_\gamma(P_\te X ) \le \cos^2(\te)  S_\gamma(X)
$$
which is the well known  integrated form of the logarithmic Sobolev inequality. Indeed, since
there is equality $\te=0$, the $\theta^2$ order term gives the  logarithmic Sobolev inequality 
$$\D S_\gamma(X)\le\frac12 I_\gamma(X)=-\frac{d^2}{d\te^2}_{|\te=0}\, S_\gamma(P_\te X) .$$

\medskip


\section{Higher dimensional inequalities}

We have studied convolution inequalities for functions on $\R$ and for random variables. But the strategy applies word by word to convolution inequalities for functions on $\R^n$ and random vectors. Let us briefly explain why.

All subspaces of $\R^n$ are equipped with the Euclidean structure inherited from the standard Euclidean structure on $\R^n$. Accordingly, for a subspace $E\subset \R^n$, the measure $\mu_E$ will stand for the Lebesgue measure or the standard Gaussian measure on $E$. Denote by  $P_E$ the orthogonal projection in $\R^n$ onto $E$, and  if $f$ is a \pbd{\mu_n}, denote by $f_{(E)}$ the \pbd{\mu_E} which is the image of $fd\mu$ under the map $P_E$.
For every random vector $X\in \R^n$, we have that $X\sim f\, d\mu \Rightarrow P_E X \sim f_{(E)} \, d\mu_E$ and
$$\int_E g(y) f_{(E)}(y)\, d\mu_E(y) = \int_{\R^n} g(P_E x)  f(x) \, d\mu_n (x).$$
The following analogue of~\eqref{supadd1} immediately holds: for $f$ a (smooth)  \pbd{\mu_n}  (or any random vector with $X\sim f d\mu_n$),
\begin{equation}\label{infoproj}
I_{\mu_E} (f_{(E)})= I_{\mu_E} (P_E X) \le \int_{\R^n} \frac{|P_E \nabla f |^2}f  \, d\mu_n .
\end{equation}
Assume we are given a collection of subspaces $E_1, \ldots E_m\subset \R^n$ and of positive numbers $c_1, \ldots , c_m>0$ such that
\be\label{multdecom}
\sum_{i=1}^m c_i \, P_{E_i} = \id_{\R^n}.
\ee
Then, by~\eqref{infoproj}-\eqref{multdecom},
$\sum_{i=1}^m c_i\, I_{\mu_{E_i}}(P_{E_i} X) \le I_{\mu_n} (X)$.
After integration along appropriate semi-groups $P_t$ (noting again that
$(P_t f)_{(E_i)} = P_t (f_{E_i})$), we get the analogue of Proposition~\ref{special}:
$$\sum_{i=1}^m c_i\, S_{\mu_{E_i}}(P_{E_i} X) \le S_{\mu_n} (X) ,$$
and by duality, for $f_i : E_i \to \R^+$, $i=1, \ldots, m$, the classical (multidimensional) geometric Brascamp-Lieb inequality
$$\int_{\R^n} \prod_{i=1}^m f^{c_i}(P_{E_i} x) \, d\mu_n \le \prod_{i=1}^m \left(\int_{E_i} f_i \, d\mu_{E_i} \right)^{c_i} .$$

The convolution inequalities on $\R^n$ are obtained by using appropriate projections onto three $n$-dimensional subspaces of $\R^{2n}$. Given an angle $\theta\in [0, \pi]$, denote by $P_\theta$ the projection in $\R^{2n}$ obtained by tensorizing the projection on the direction $u(\theta)$ in $\R^2$:
$$P_\theta = 
 U_\te^\ast U_\te, \quad \textrm{with} \quad U_\te = 
 \begin{pmatrix}
 \cos(\te)  \id_{\R^n} & 
       \sin(\te)  \id_{\R^n} 
 \end{pmatrix}:\R^{2n}\to\R^n.
$$
Identifying $\R^{2n}$ with $\R^n\times \R^n$, $P_\te (x,y) = U_\te ^\ast (\cos(\te) x + \sin(\te) y)$  for $x,y\in \R^n$. The projection onto the first $\R^n$ is $P_0$: $P_0(x,y)=x$.
Note that the image subspace $E_\theta:= \textrm{Im}(P_\te)$ is normally parametrized by $\R^n$ as
$E_\te = \{U_\te^\ast z ; \ z\in \R^n\}$.
Therefore, for a random vector $(X,Y)\in \R^{2n}$, 
$$S_{\mu_{E_\te}} \big(P_{\te}(X,Y)\big) = S_{\mu_n}(\cos(\te) X + \sin(\te) Y).$$

Assume then we are given $p_1, p_2, p_3>1$ with $\frac1{p_1} +\frac1{p_2} + \frac1{p_3} = 2$, and let $\te_2, \te_3$ be the angles given by Theorem~\ref{maintheo}. These angles came from the decomposition of the identity of Proposition~\ref{theo:decomp2}, which extends to a decomposition of the identity of $\R^{2n}$:
$$\frac1{p_1} P_0 + \frac1{p_2} P_{\te_2}  + \frac1{p_3} P_{\te_3} = \id_{\R^{2n}}.$$
By the previous considerations, Theorem~\ref{maintheo} immediately extends to random vectors on $\R^n$ and functions on $\R^n$: for every random vectors $X, Y\in \R^n$, 
$$
\frac1{p_1} \, S_{\mu_n}\big(X\big) + \frac1{p_2}\,  S_{\mu_n}\big(\cos(\te_2) X + \sin(\te_2) Y\big)  +  \frac1{p_3} \, S_{\mu_n}\big(\cos(\te_3) X + \sin(\te_3) Y\big)  \le S_{\mu_{2n}}\big(X,Y\big).
$$
Similarly,
for every functions $f,g:\R^n\to \R$ with $g\in L^{p_2}(\mu_n)$ and $ h\in L^{p_3}(\mu_n)$,
$$
 \left\|\int  g(\cos(\te_2) x + \sin(\te_2) y)\, h(\cos(\te_3) x + \sin(\te_3) y) \, d\mu_n(y) \right\|_{L^{p'_1}(d\mu_n(x))} \le \|g\|_{L^{p_2}(\mu_n)}\, \|h\|_{L^{p_3}(\mu_n)}.
 $$
The cases of equality are also the same. Then, the multidimentional forms of Young's convolution inequality, Shannon's inequality, Hypercontractivity and logarithmic Sobolev inequality are obtained by the same computations we have performed previously.

Of course, it is also possible to derive these inequalities from the one-dimensional ones by standard tensorization techniques. But as pointed out earlier,  the geometric approach used here might prove useful in some other contexts.

\bigskip


\bigskip

{\small
\noindent
D. C.-E.: Institut de Math\'ematiques de Jussieu, Universit\'e Pierre et Marie Curie (Paris 6),
4 place Jussieu, 75252 Paris Cedex 05, France, cordero@math.jussieu.fr
\medskip

\noindent
M.L.: Institut de Math\'ematiques de Toulouse, Universit\'e de Toulouse, 31062 Toulouse, France, ledoux@math.univ-toulouse.fr
}

\end{document}